\newtheorem{thm}{Theorem}
\newtheorem{lemma}{Lemma}
\newtheorem{cor}{Corollary}
\theoremstyle{definition}
\newtheorem{defn}{Definition}
\theoremstyle{remark}
\newtheorem{rem}{Remark}
\newcommand{\pl}{\mathbb{C}P^1}
\newcommand{\Q}{\mathbb{Q}}
\newcommand{\Qb}{\overline{\mathbb{Q}}}
\newcommand{\B}{\mathcal{B}}
\newcommand{\N}{\mathcal{N}}
\newcommand{\m}{\pmb{m}}
\newcommand{\e}{\pmb{e}}
\newcommand{\G}{\Gamma}
\newcommand{\vs}{\rule{0pt}{2.6ex}\rule[-1.2ex]{0pt}{0pt}}
\begin{document}
\date{}

\author{P.\ G.\ Zograf}
\address{St.Petersburg Department \\
Steklov Mathematical Institute\\
Fontanka 27\\ St. Petersburg 191023, and
Chebyshev Laboratory\\ St. Petersburg State University\\
14th Line V.O. 29B\\
St.Petersburg 199178 Russia}
\email{zograf{\char'100}pdmi.ras.ru}

\title[Enumerating Grothendieck's dessins]{Enumeration of Grothendieck's dessins\\ and KP hierarchy}

\thanks{Supported by the Government of the Russian Federation megagrant 11.G34.31.0026, by JSC ``Gazprom Neft", and by the RFBR grants 13-01-00935-a and 13-01-12422-OFI-M2.}

\begin{abstract}Branched covers of the complex projective line ramified over $0,1$ and $\infty$ (Grothendieck's {\em dessins d'enfant}) of fixed genus and degree are effectively enumerated. More precisely, branched covers of a given ramification profile over $\infty$ and given numbers of preimages of $0$ and $1$ are considered. The generating function for the numbers of such covers is shown to satisfy a PDE that determines it uniquely modulo a simple initial condition. Moreover, this generating function satisfies an infinite system of PDE's called the KP (Kadomtsev-Petviashvili) hierarchy. A specification of this generating function for certain values of parameters generates the numbers of {\em dessins} of given genus and degree, thus providing a fast algorithm for computing these numbers.
\end{abstract}

\maketitle

\section{Introduction and preliminaries} 
An observation of Grothendieck \cite{G} that the absolute Galois group ${\rm Gal}(\Qb/\Q)$ naturally acts on simple combinatorial objects that he called {\em dessins d'enfants} is based on a remarkable result by Belyi:

\begin{thm} {\rm (Belyi, \cite{B})}
A smooth complex algebraic curve $C$ is defined over the field of algebraic numbers $\Qb$ if and only if there exist a non-constant meromorphic function $f$ on $C$ (or a holomorphic branched cover $f:C\to\pl$) that is ramified only over the points $0,1,\infty\in\pl$.
\end{thm}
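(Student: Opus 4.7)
The plan is to prove the two implications separately. The ``if'' direction is a rigidity-and-finiteness argument: if $f : C \to \pl$ is ramified only over $\{0,1,\infty\}$, then in degree $n$ such pairs $(C,f)$ correspond bijectively to conjugacy classes of transitive pairs $(\sigma_0,\sigma_1) \in S_n \times S_n$ (with $\sigma_\infty = (\sigma_0\sigma_1)^{-1}$ prescribing the ramification over $\infty$). For each $n$ this set is finite, so the natural $\mathrm{Gal}(\C/\Qb)$-action on isomorphism classes of pairs $(C,f)$ has finite orbits; hence each such pair, and in particular $C$ itself, is defined over a number field.

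The ``only if'' direction, which is Belyi's original contribution, starts from an arbitrary non-constant meromorphic $g : C \to \pl$ defined over $\Qb$, whose finite ramification set $R$ lies in $\pl(\Qb)$. I would proceed in two stages. \emph{Stage 1 (descent to $\Q$-rational ramification):} let $d$ be the maximal $\Q$-degree of an element of $R$; if $d>1$, pick $\alpha \in R$ of degree $d$ with minimal polynomial $P \in \Q[x]$ and replace $g$ by $P \circ g$. The new finite critical values are $P(R) \cup C(P)$, where $C(P)$ denotes the critical values of $P$; since $C(P)$ consists of $P$-images of roots of $P' \in \Q[x]$ (a polynomial of degree $d-1$), these are algebraic numbers of degree at most $d-1$ over $\Q$. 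The maximum $\Q$-degree in the ramification set therefore strictly decreases, and finitely many iterations bring $R$ into $\pl(\Q)$.

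\emph{Stage 2 (reduction to $\{0,1,\infty\}$):} after a $\Q$-M\"obius transformation, one may assume $\{0,1,\infty\} \subset R$. If $R \neq \{0,1,\infty\}$, pick $\lambda \in R \setminus \{0,1,\infty\}$ and, via another $\Q$-M\"obius move, arrange $\lambda = m/(m+n) \in (0,1) \cap \Q$ with $\gcd(m,n)=1$. Post-compose $g$ with the Belyi polynomial
\[
\lambda_{m,n}(x) = \frac{(m+n)^{m+n}}{m^m\, n^n}\, x^m (1-x)^n,
\]
whose derivative factors as a constant multiple of $x^{m-1}(1-x)^{n-1}\bigl(m - (m+n)x\bigr)$. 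Hence the only finite critical values of $\lambda_{m,n}$ are $\lambda_{m,n}(0)=\lambda_{m,n}(1)=0$ and $\lambda_{m,n}(m/(m+n))=1$, while $\lambda_{m,n}(\infty)=\infty$. Thus post-composing removes $\lambda$ from the ramification set, preserves $\{0,1,\infty\}$, and replaces the remaining rational points by their (still rational) $\lambda_{m,n}$-images. Induction on $|R \setminus \{0,1,\infty\}|$ completes the proof.

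I expect the main obstacle to be the inductive bookkeeping in Stage 2: each application of $\lambda_{m,n}$ permutes the remaining rational ramification values, and a priori one could worry that new ``foreign'' values are introduced. This is resolved by the key feature of Belyi's trick: since $\lambda_{m,n}$ contributes \emph{no} finite critical values outside $\{0,1\}$, the cardinality $|R \setminus \{0,1,\infty\}|$ is strictly monotone decreasing, so the induction terminates. Verifying this property of $\lambda_{m,n}$ reduces to the direct factorization of $\lambda_{m,n}'$ displayed above.
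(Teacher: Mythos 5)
The paper does not prove this statement at all: it is quoted from Belyi's article \cite{B} as background, so there is no in-house argument to compare with, and your proposal has to stand on its own. In outline it is the standard proof, but two steps need attention. In Stage 1, the termination claim is wrong as stated: composing with the minimal polynomial $P$ of a \emph{single} element $\alpha$ of maximal degree $d$ does not force the maximal $\Q$-degree of the critical-value set to drop, because another degree-$d$ element $\gamma\in R$ not conjugate to $\alpha$ is not a root of $P$, and $P(\gamma)$ may again have degree $d$. For instance, if $R\supset\{\sqrt2,\,1+\sqrt3\}$ and $P=x^2-2$, then $P(1+\sqrt3)=2+2\sqrt3$ still has degree $2$. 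So the induction you describe does not terminate for the reason you give. It is repairable: either induct lexicographically on the pair (maximal degree, number of elements attaining it) — conjugates of $\alpha$ go to $0$, the remaining degree-$d$ elements go to elements of degree dividing $d$, and the new critical values of $P$ have degree at most $d-1$, so the second entry strictly drops — or, as in Belyi's original argument, compose with the product of the minimal polynomials of \emph{all} irrational critical values and induct on the degree of that product, using that the new irrational critical values are roots of a rational polynomial of strictly smaller degree.

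In the ``if'' direction, the reduction to finitely many transitive monodromy pairs $(\sigma_0,\sigma_1)$ via the Riemann existence theorem is fine (and uses that $0,1,\infty$ are fixed by every field automorphism), but the final assertion — that finiteness of the orbit of the isomorphism class of $(C,f)$ under field automorphisms of $\C$ implies $C$ is defined over $\Qb$ — is precisely the nontrivial content of this implication and is left unproved. One must either invoke Weil's descent criterion and address the field-of-moduli versus field-of-definition issue (a curve is definable over a finite extension of its field of moduli), or run a spreading-out/specialization argument; also the acting group should be $\mathrm{Aut}(\C/\Q)$ or $\mathrm{Aut}(\C)$ rather than a ``Galois group'' of the non-algebraic extension $\C/\Qb$. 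Stage 2 is correct: the factorization of $\lambda_{m,n}'$ shows its finite critical values lie in $\{0,1\}$, one of the six M\"obius maps preserving $\{0,1,\infty\}$ moves any rational $\lambda\notin\{0,1,\infty\}$ into $(0,1)$, and $|R\setminus\{0,1,\infty\}|$ strictly decreases, so that induction does terminate.
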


We call $(C,f)$, where $C$ is a smooth complex algebraic curve and $f$ is a meromorphic function on $C$ unramified over $\pl\setminus\{0,1,\infty\}$, a {\em Belyi pair}. For a Belyi pair $(C,f)$ denote by $g$ the genus of $C$ and by $d$ the degree of $f$. Consider the inverse image $f^{-1}([0,1])\subset C$ of the real line segment $[0,1]\subset\pl$. This is a connected bicolored graph with $d$ edges, whose vertices of two colors are the preimages of 0 and 1 respectively, and the ribbon graph structure is induced by the embedding $f^{-1}([0,1])\hookrightarrow C$. (Recall that a ribbon graph structure is given by prescribing a cyclic order of half-edges at each vertex of the graph.) Note that a connected bicolored ribbon graph of genus $g$ with $d$ edges uniquely corresponds to a {\em hypermap of genus $g$ on $d$ darts}.\footnote{Combinatorially they are the same objects given by a pair of permutations $\sigma,\tau\in S_d$ such that the group generated by $\sigma, \tau$ acts transitively on the set $\{1,\dots,d\}$, see \cite{W}.} The following is straightforward (cf. also \cite{LZ}):

\begin{lemma}\label{Gr}{\rm (Grothendieck, \cite{G})}
There is a one-to-one correspondence between the isomorphism classes of Belyi pairs and connected bicolored ribbon graphs.
\end{lemma}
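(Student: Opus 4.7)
The plan is to give explicit inverse constructions between isomorphism classes of Belyi pairs and of connected bicolored ribbon graphs, using the permutation encoding of the footnote as the bridge.

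Forward direction. Given a Belyi pair $(C,f)$, form $\G=f^{-1}([0,1])$. Because $f$ has degree $d$ and is unramified over the open interval $(0,1)$, there are exactly $d$ edges, each joining a preimage of $0$ (colored black) to a preimage of $1$ (colored white). The complex orientation of $C$ induces a cyclic order of half-edges at each vertex, so $\G$ inherits the structure of a bicolored ribbon graph. To see that the embedding is cellular, pull back via $f$ the CW decomposition of $\pl$ with $0$-cells $\{0,1,\infty\}$, $1$-cells the three arcs of $\mathbb{R}P^1$, and $2$-cells the open upper and lower hemispheres; since $f$ is unramified off $\{0,1,\infty\}$, the pullback is a CW decomposition of $C$. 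Deleting the preimages of the two arcs meeting $\infty$ fuses the face cells around each preimage of $\infty$ into a single open disk, so $C\setminus\G$ is a disjoint union of open disks, one per preimage of $\infty$. In particular $\G$ is connected and its combinatorial genus equals $g$.

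Backward direction and mutual inverses. Conversely, encode a connected bicolored ribbon graph $\G$ with $d$ edges by a transitive pair $(\sigma,\tau)\in S_d\times S_d$, with $\sigma$ (resp.\ $\tau$) recording the cyclic order of edges at black (resp.\ white) vertices. Since $\pi_1(\pl\setminus\{0,1,\infty\})$ is free on loops $\gamma_0,\gamma_1$ around $0$ and $1$, the rule $\gamma_0\mapsto\sigma$, $\gamma_1\mapsto\tau$ determines a transitive $\pi_1$-set of cardinality $d$, equivalently a connected unramified $d$-fold topological cover of the thrice-punctured sphere. Pulling back the complex structure and filling in the punctures by the standard local model $z\mapsto z^k$ (Riemann existence theorem) produces a compact Riemann surface $C$ and a holomorphic branched cover $f:C\to\pl$, unramified off $\{0,1,\infty\}$, with local degrees over $0,1,\infty$ given by the cycle lengths of $\sigma$, $\tau$ and $(\sigma\tau)^{-1}$. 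One then checks that the two constructions are mutually inverse on isomorphism classes: applying the forward construction to the $(C,f)$ built from $(\sigma,\tau)$ reproduces $(\sigma,\tau)$ up to simultaneous conjugation (i.e.\ up to relabeling of darts), and rebuilding the cover from the monodromy of $f^{-1}([0,1])$ recovers $(C,f)$ by the uniqueness clause of Riemann existence. Isomorphisms on either side correspond under this dictionary, so the bijection descends to isomorphism classes.

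Main obstacle. The bookkeeping in the forward direction is essentially the identification of the preimages of the three standard arcs; the real content of the lemma sits in the backward direction, namely the invocation of the Riemann existence theorem to convert combinatorial monodromy data into a bona fide compact Riemann surface together with a holomorphic map, and in particular the holomorphic extension of the topological cover across the preimages of $\{0,1,\infty\}$. Once this step is in place, verifying that the two assignments are inverse and $\mathrm{Aut}$-equivariant is routine.
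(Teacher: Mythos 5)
Your proof is correct, and it is essentially the standard argument that the paper itself treats as given: for this lemma the paper offers no proof at all (it calls the statement ``straightforward'' and refers to Lando--Zvonkin), but the proof it does write out for the subsequent lemma on Belyi graphs uses exactly the same mechanism as your backward direction --- build a topological cover of $\pl\setminus\{0,1,\infty\}$ from the combinatorial data, lift the complex structure through the covering map, and fill in the punctures. The only real difference is packaging: you pass through the permutation pair $(\sigma,\tau)$ and invoke monodromy plus the Riemann existence theorem, whereas the paper constructs the cover from a morphism of ribbon graphs onto the base graph $\G_1$ and lifts the complex structure directly; these are two phrasings of the same construction, and your version has the mild advantage of making the bijection with transitive pairs up to simultaneous conjugation (the footnote's hypermap encoding) explicit, including how automorphisms match on both sides. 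Your forward direction (pulling back the CW structure of $\pl$ and checking that the faces are disks, one per pole) is also consistent with the paper's discussion of the complement $\pl\setminus f^{-1}([0,1])$, so there is no gap.
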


\begin{defn}
A connected bicolored ribbon graph representing a Belyi pair is called Grothendieck's {\em dessin d'enfant}.
\end{defn}

Actually, the dessin $f^{-1}([0,1])\hookrightarrow C$ corresponding to a Belyi pair $(C,f)$ can naturally be extended to a bicolored  triangulation of the curve $C$. To see that, let us cut $\pl$ along the real projective line into two triangles with common vertices $0,1,\infty$, and let us color the upper and the lower triangles in two different colors (say, white and gray, see Fig.~1). Denote by $m=|f^{-1}(\infty)|$ the number of distinct poles of $f$. The complement $\pl\setminus f^{-1}([0,1])$ is the disjoint union of $m$ polygons, each with an even number of sides. Each such polygon contains precisely one pole of $f$ as its center. Connecting the center of each polygon with its vertices, we obtain a triangulation of $C$ that is properly bicolored (the restriction of $f$ to each triangle is a homeomorphism onto either upper or lower triangle of $\pl\setminus\mathbb{R}P^1$, and triangles of the same color cannot have a common side). 

\begin{figure}[hbt]%
 \begin{center}
 \includegraphics[width=5cm]{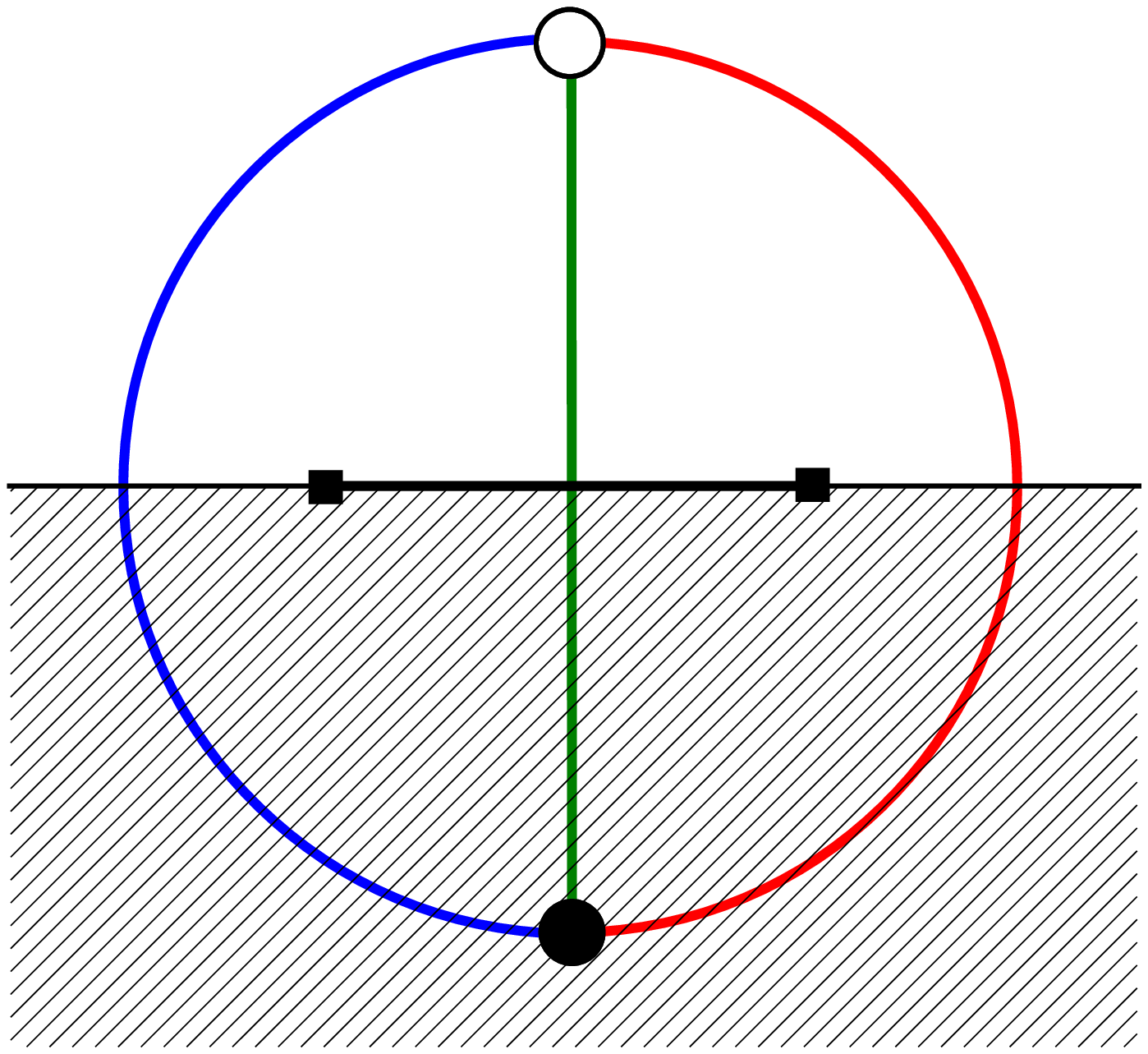}
 \caption{The Belyi graph $\G_1$ corresponding to the Belyi pair $(\pl,{\rm id})$.}
 \label{theta}%
\end{center}
\end{figure}

The 3-valent graph dual to the triangulation described in the previous paragraph is connected (because $f^{-1}([0,1])$ is) and bicolored (the white vertices are the centers of white triangles and the black vertices are the centers of gray triangles, see Fig.~1). Moreover, its edges can be properly colored into 3 colors: the edges intersecting $f^{-1}([-\infty,0])$ are colored in blue, the edges intersecting $f^{-1}([0,1])$ -- in green, and the edges intersecting $f^{-1}([1,\infty])$ -- in red. The ribbon graph structure is given by the blue-green-red order of edges at white vertices and red-green-blue -- at black vertices (both in the counterclockwise direction). Such a connected bicolored 3-edge colored ribbon graph we call a {\em Belyi graph}. 

\begin{lemma}
There is a one-to-one correspondence between the isomorphism classes of Belyi pairs and Belyi graphs. 
\end{lemma}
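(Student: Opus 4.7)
The plan is to invoke Lemma~\ref{Gr} and reduce the problem to establishing a bijection between dessins d'enfant and Belyi graphs. The forward direction, from a Belyi pair to its Belyi graph, has essentially been sketched in the paragraphs preceding the statement; I would formalize it as follows. Given a Belyi pair $(C,f)$, pulling back the cell decomposition of $\pl$ induced by the real projective line produces a canonical bicolored triangulation of $C$. Its dual graph has one vertex per triangle (inheriting the white/gray coloring), three edges per vertex (one per side, colored blue, green, or red according to which arc of $\mathbb{R}P^1\setminus\{0,1,\infty\}$ the side maps to), and the ribbon structure is induced by the orientation of $C$. The prescribed cyclic orders (blue-green-red counterclockwise at white, red-green-blue at black) are then automatic, because $f$ restricted to each triangle is an orientation-preserving homeomorphism onto a fixed triangle of $\pl$.

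For the reverse direction, given a Belyi graph $\G$, I would construct a Belyi pair as follows. Take an abstract oriented triangle for each vertex of $\G$, colored white or gray according to the vertex color, with its three sides labeled blue, green, red in the cyclic order dictated by the ribbon structure. Each edge of $\G$ then prescribes a gluing of two triangles of opposite colors along their same-colored sides. The resulting topological space $C$ is a closed oriented surface---connected since $\G$ is---and the ribbon structure guarantees that the link of every vertex is a circle. One defines $f:C\to\pl$ triangle by triangle as an orientation-preserving homeomorphism onto the upper (respectively lower) closed triangle of $\pl\setminus\mathbb{R}P^1$ for white (respectively gray) vertices, sending blue, green, red sides to $[-\infty,0]$, $[0,1]$, $[1,\infty]$ respectively; these maps agree on common edges by construction. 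Pulling back the complex structure of $\pl$ away from the vertices of the triangulation, and extending it across them, endows $C$ with a unique complex structure with respect to which $f$ is a holomorphic branched cover ramified only over $0,1,\infty$.

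Finally, I would check that the two constructions are mutually inverse up to isomorphism. Starting from a Belyi pair, passing to its Belyi graph and reconstructing yields a Belyi pair canonically identified with the original, because the triangulations and the maps to $\pl$ coincide on each triangle; the other round trip returns the same Belyi graph tautologically. An isomorphism of Belyi pairs---i.e.\ a biholomorphism of source curves intertwining the two maps to $\pl$---carries triangulations to triangulations, hence induces an isomorphism of Belyi graphs preserving all colorings and the ribbon structure, and conversely an isomorphism of Belyi graphs lifts via the gluing to a biholomorphism intertwining the two $f$'s. The step I expect to require the most care is verifying that $f$ extends holomorphically across the vertices (the only potential ramification points) and that the gluing really assembles to a closed orientable surface; both reduce to the observation that the faces of $\G$ around preimages of $0$, $1$, $\infty$ correspond respectively to alternating blue-green, red-green, and blue-red cycles, each necessarily of even length $2k$, yielding local models $z\mapsto z^k$ at each ramification point.
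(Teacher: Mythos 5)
Your proposal is correct and follows essentially the same route as the paper: the forward direction is the dual-triangulation construction (equivalently, pulling back the graph $\G_1$ of Fig.~1), and the reverse direction reconstructs the topological cover from the ribbon structure and lifts the complex structure from $\pl$. The only difference in packaging is that the paper extends the color-preserving map $\G\to\G_1$ to an unramified cover of $\pl\setminus\{0,1,\infty\}$ and then fills in the punctures, whereas you glue closed triangles and extend the complex structure across the vertices via the local models $z\mapsto z^k$; both are standard, and your explicit check that isomorphisms correspond on both sides is a harmless extra.
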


\begin{proof}
The statement of the lemma follows from Lemma \ref{Gr} since there is a one-to-one correspondence between Grothendieck's dessins and Belyi graphs. However, it can easily be established directly. To a Belyi pair $(C,f)$ we associate the Belyi graph $f^{-1}(\G_1)\hookrightarrow C$, where $\G_1$ is the graph displayed on Fig.~1. Vice versa, for a Belyi graph $\G$ there is a unique map $\G\to \G_1$ sending the vertices and edges of $\G$ to the vertices and edges of $\G_1$ of the corresponding color. Since both $\G$ and $\G_1$ are ribbon graphs, this map can be extended to a toplogical cover $f:\Sigma\to\pl\setminus\{0,1,\infty\}$, where $\Sigma$ is a topological surface with punctures. Lifting the complex structure from $\pl\setminus\{0,1,\infty\}$ to $\Sigma$ by means of $f$ and filling in the punctures we get a Belyi pair $(C,f)$ uniquely up to an isomorphism.
\end{proof}

We are interested in the weighted count of Belyi graphs of given degree and genus. To be precise, by an automorphism of a Belyi graph $\G$ we understand a graph automorphism that respects the colors of vertices and edges. We denote by ${\rm Aut}(\G)$ the automorphism group of $\G$ and put
\begin{align}\label{wc}
G_{d,g}=\sum_{\G}\frac{1}{|{\rm Aut}(\G)|}\;,
\end{align}
where the sum is taken over all Belyi graphs $\G$ of degree $d$ and genus $g$. In this paper we derive a recursion that allows to compute the numbers $G_{d,g}$ relatively fast. Note that this also solves the uniform (i.~e. with all weights equal to $1$) enumeration problem for Belyi graphs (or Grothendieck's dessins). A passage from the weighted count to the uniform count is described in detail in \cite{MN}.

\section{Counting problem}

Let $(C,f)$ be a Belyi pair of genus $g$ and degree $d$, and let $\G$ be the corresponding Belyi graph. Put $k=|f^{-1}(0)|,\;l=|f^{-1}(1)|$, and denote by $(f)_\infty=[1^{m_1}\,2^{m_2}\,\dots]$ the ramification profile of $f$ over $\infty$, where $\sum_{i\geq 1}m_i=|f^{-1}(\infty)|,\;\sum_{i\geq 1}im_i=d$ and $2g-2=d-(k+l+m)$. In terms of the Belyi graph $\G$ these quantities can be interpreted as follows: $k$ is the number of red-green cycles, $l$ is the number of green-blue cycles, and $m_i$ is the number of red-blue cycles of length $2i$ (i.e. consisting of $i$ red and $i$ blue edges). The triple $(k,l;\pmb{m})$, where ${\pmb m}=(m_1,m_2,\dots)$, will be called here the {\em type} of the Belyi graph $\G$, and the set of all Belyi graphs of type $(k,l;\pmb{m})$ will be denoted by $\B_{k,l;\pmb{m}}$.

In this section we are interested in the weighted count of Belyi graphs of a given type. Namely, put
\begin{align}
\N_{k,l}(\pmb{m})=\sum_{\G\in\B_{k,l;\pmb{m}}}\frac{1}{|{\rm Aut}(\G)|}\;,
\end{align}
and consider the degree $d$ generating functions
\begin{align}
F_d(u,v,t_1,t_2,\dots)=\sum_{k,l\geq 1}\quad \sum_{\sum_{i\geq 1} im_i=d}\;\N_{k,l}(\pmb{m}) u^k v^l t_1^{m_1} t_2^{m_2} \dots\;
\end{align}
that can be combined into the total generating function
\begin{align}\label{gf}
F(s,u,v,t_1,t_2,\dots) = \sum_{d\geq 1}s^{d} \, F_d(u,v,t_1,t_2,\dots)\;.
\end{align}

The main technical statement of this note is the following

\begin{thm}\label{main}
Put
\begin{align}
&L_1 F=\sum_{i=2}^\infty (i-1)t_i\,\frac{\partial F}{\partial t_{i-1}}\;,\\
&M_1 F=\sum_{i=2}^\infty \sum_{j=1}^{i-1} (i-1)t_j t_{i-j}\,\frac{\partial F}{\partial t_{i-1}} + j(i-j) t_{i+1}\,\frac{\partial^2 F}{\partial t_j \partial t_{i-j}}\;,\\
&Q_1 F=\sum_{i=2}^\infty \sum_{j=1}^{i-1} j(i-j) t_{i+1}\,\frac{\partial F}{\partial t_j}\cdot\frac{\partial F}{\partial t_{i-j}}\;.
\end{align}
Then the total generating function $F=F(s,u,v,t_1,t_2,\dots)$ satisfies the non-linear evolution equation
\begin{align}\label{pde}
\frac{\partial F}{\partial s}=((u+v)L_1 + M_1 + Q_1)F+uvt_1
\end{align}
and is uniquely determined by the initial condition $F|_{s=0}=0\,.$
Equivalently, the partition function $Z=e^F$ is explicitly given by the formula
\begin{align}
Z(s,u,v,t_1,t_2,\dots)=e^{s((u+v)L_1+M_1+uvt_1)}\,1\label{par}
\end{align}
(here ``1'' stands for the constant function identically equal to 1).
\end{thm}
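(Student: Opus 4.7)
The plan is to treat the two displays \eqref{pde} and \eqref{par} in sequence: the exponential formula \eqref{par} is an algebraic consequence of the non-linear PDE \eqref{pde}, so the substance lies in establishing the evolution equation via an edge-addition/cut-and-join argument on Belyi graphs.

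I would first dispose of the passage between the two forms and of uniqueness, which are purely formal. The operators $L_1$ and $uvt_1$ are, respectively, a first-order derivation and a multiplication, so $L_1 e^F=(L_1 F)e^F$. For the second-order piece of $M_1$, Leibniz's rule gives
$\partial^2 e^F/(\partial t_j\partial t_{i-j})=\bigl(\partial^2 F/(\partial t_j\partial t_{i-j})+\partial_{t_j} F\cdot\partial_{t_{i-j}} F\bigr)e^F$,
producing the bilinear correction that is exactly $Q_1 F$. Hence \eqref{pde} is equivalent to the \emph{linear} first-order equation $\partial_s Z=((u+v)L_1+M_1+uvt_1)Z$ with $Z|_{s=0}=1$, which integrates to \eqref{par}. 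Uniqueness of $F$ given $F|_{s=0}=0$ follows from a weight count: each summand on the RHS of \eqref{pde} raises the $t$-weight by exactly one, so \eqref{pde} amounts to an explicit recursion $(d+1)F_{d+1}=((u+v)L_1+M_1+Q_1)F_d+\delta_{d,0}\,uvt_1$ that determines all $F_d$ successively.

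The combinatorial core is the PDE \eqref{pde} itself. I would read the coefficient of $s^d$ on its LHS, namely $(d+1)F_{d+1}$, as an enumeration of rooted degree-$(d+1)$ Belyi graphs (rooted at one of the $d+1$ edges of the underlying dessin, the factor $d+1$ supplied by orbit-stabilizer applied to the $1/|{\rm Aut}|$-weighting in \eqref{wc}). Removing the rooted edge produces a degree-$d$ Belyi structure, and the topological type of this edge falls into a four-case classification matching the four contributions on the RHS: (i) a \emph{pendant} edge with a white or black leaf, contributing the prefactor $u+v$, with the $(i-1)$ factor counting reattachment corners of opposite color along a red-blue cycle of length $2(i-1)$, giving $(u+v)L_1$; (ii) a \emph{chord} splitting a single red-blue cycle of length $2(i-1)$ into two cycles of lengths $2j$ and $2(i-j)$, giving the first part of $M_1$; (iii) a \emph{non-bridge} edge whose two sides lie on two distinct red-blue cycles of lengths $2j$ and $2(i-j)$ of one connected graph, merged into a cycle of length $2(i+1)$, giving the second part of $M_1$; and (iv) a \emph{bridge} whose removal disconnects the graph into two connected Belyi graphs carrying the two merged cycles, giving $Q_1$. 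The $d=0$ base case, a single bicolored digon, supplies the inhomogeneous $uvt_1$.

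I expect the main obstacle to lie in case (iv). The bilinear expression $\partial_{t_j}F\cdot\partial_{t_{i-j}}F$ enumerates \emph{ordered} pairs of connected Belyi graphs with a distinguished face on each, whereas removing a bridge from a connected graph produces an a priori unordered pair of components. The ordering is supplied canonically by the bridge itself---the component containing its white endpoint is declared the first factor---after which the $j(i-j)$ multiplicity counts the $j$ whites of one marked face against the $i-j$ blacks of the other, matching the number of bridge attachments exactly. A parallel subtlety in case (iii), where the two merged faces may have equal size, is handled because $\partial^2/(\partial t_j\partial t_{i-j})$ already distinguishes ordered pairs of distinct faces. With these ordering conventions in place, matching operator coefficients with geometric corner counts in all four cases is routine.
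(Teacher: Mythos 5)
Your proposal is correct and takes essentially the same route as the paper: the paper's proof likewise marks/deletes a green edge (an edge of the dessin), classifies the deletion into the same cases (it just splits your pendant case into white-leaf and black-leaf, giving five cases in all), matches the insertion counts --- including the ordering convention for the bridge case and the equal-size subtlety --- to the coefficients of $(u+v)L_1$, $M_1$ and $Q_1$, handles automorphisms via the factor $d$, and obtains uniqueness and $Z=e^{F}$ by the same formal degree-by-degree argument. The only (cosmetic) correction: in your degree recursion the quadratic contribution is the convolution $\sum_{n}\frac{\partial F_n}{\partial t_j}\,\frac{\partial F_{d-n}}{\partial t_{i-j}}$ over lower-degree pieces, as in the paper's Eq.~(\ref{ind}), not literally ``$Q_1F_d$''.
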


\begin{proof}
Let us first re-write Eq.~(\ref{pde}) as a recursion for the numbers $\N_{k,l}(\m)$. Denote by $\e_j$ the sequence with 1 at the $j$-th place and 0 elsewhere. Then Eq.~(\ref{pde}) is equivalent to
\begin{align}
d\,\N_{k,l}(\m)=&\sum_{i=2}^\infty (i-1)(m_{i-1}+1)(\N_{k-1,l}(\m+\e_{i-1}-\e_i)+\N_{k,l-1}(\m+\e_{i-1}-\e_i))\nonumber\\
+&\sum_{i=2}^\infty \sum_{j=1}^{i-1} (i-1)(m_{i-1}+1-\delta_{j,1}-\delta_{i-j,1})\,\N_{k,l}(\m-\e_j-\e_{i-j}+\e_{i-1})\nonumber\\
+&\sum_{i=2}^\infty \sum_{j=1}^{i-1} j(i-j)(m_j+1)(m_{i-j}+1+\delta_{j,i-j})\,\N_{k,l}(\m+\e_j+\e_{i-j}-\e_{i-1})\nonumber\\
+&\sum_{i=2}^\infty \sum_{j=1}^{i-1}\;\sum_{\substack{k_1+k_2=k\\l_1+l_2=l}}\;\sum_{\substack{\m^{(1)}+\m^{(2)}\\=\m-\e_{i+1}}}
j(i-j)(m_j^{(1)}+1)(m_{i-j}^{(2)}+1)\nonumber\\
&\hspace{1.3in}\times\N_{k_1,l_1}(\m^{(1)}+\e_j)\,\N_{k_2,l_2}(\m^{(2)}+\e_{i-j})\label{rec}\;.
\end{align}
We prove it by establishing a direct bijection between Belyi graphs counted in the left and right hand sides of (\ref{rec}). We introduce two operations on Belyi graphs that we call edge deletion and edge insertion. To perform an edge deletion, we erase a green edge together with red and blue half-edges incident to its endpoints and connect the remaining loose red and blue half-edges together. Edge insertion is an operation inverse to edge deletion (note that in order to keep track of a graph's type we can delete and insert only green edges).

Suppose that we start with a Belyi graph $\G$ of type $(k,l;\m)$. There are $d=\sum_{i\geq 1} im_i$ possibilities to pick a green edge in $\G$. Then the edge deletion results in one of the following 5 cases:

\begin{enumerate}[(i)]
\item A red-green double edge gets deleted, see Fig. \ref{double}, (a). The graph type changes to $(k-1,l;\m+\e_{i-1}-\e_i)$, where $i$ is the number of red edges in the red-blue cycle of $\G$ containing the deleted red edge.
\item A green-blue double edge gets deleted, see Fig. \ref{double}, (b). The graph type changes to $(k,l-1;\m+\e_{i-1}-\e_i)$, where $i$ is the number of blue edges in the red-blue cycle of $\G$ containing the deleted blue edge.

\begin{figure}[hbt]%
 \begin{center}
 \includegraphics[width=8cm]{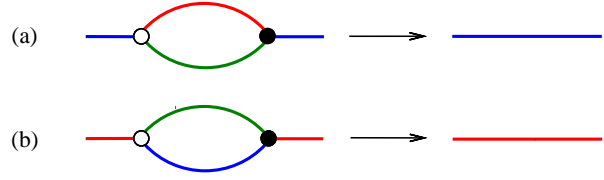}
 \caption{Deleting a double edge.}
 \label{double}%
\end{center}
\end{figure}

\item The green edge connects two red-blue cycles of lengths $2j$ and $2(i-j)$. The edge deletion gives rise to one red-blue cycle of length $2(i-1)$, see Fig.~\ref{unite}. The  graph type changes to $(k,l;\m-\e_j-\e_{i-j}+\e_{i-1})$.

\begin{figure}[hbt]%
 \begin{center}
 \includegraphics[width=10cm]{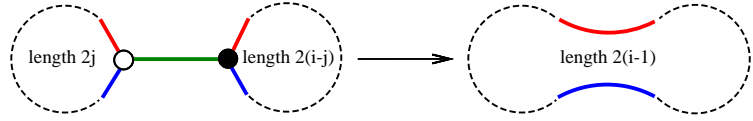}
 \caption{Deleting a red edge connecting two green-blue cycles.}
 \label{unite}%
\end{center}
\end{figure}

\item The green edge connects two vertices of the same red-blue cycle of length $2(i+1)$. After the edge deletion the cycle splits into two ones of lengths $2i$ and $2(i-j)$ while the graph remains connected, see Fig.~\ref{split}. The graph type changes to $(k,l;\m+\e_j+\e_{i-j}-\e_{i+1})$.

\begin{figure}[hbt]%
 \begin{center}
 \includegraphics[width=7.5cm]{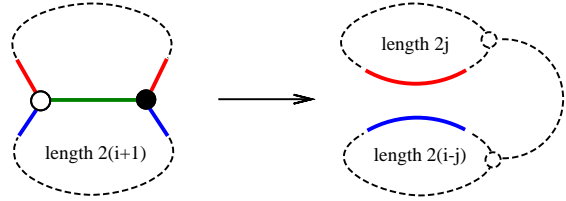}
 \caption{Deleting a red edge connecting two vertices on the same green-blue cycles.}
 \label{split}%
\end{center}
\end{figure}

\item The green edge connects two vertices of the same red-blue cycle of length $2(i+1)$. After the edge deletion the cycle splits into two ones of lengths $2i$ and $2(i-j)$, but the graph gets disconnected. The types of connected components are $(k_1,l_1;\m^{(1)}+\e_j)$ and $(k_2,l_2;\m^{(2)}+\e_{i-j})$ respectively, where $k_1+k_2=k,\;l_1+l_2=l$ and $\m^{(1)}+\m^{(2)}=\m-\e_{i+1}$.
\end{enumerate}

In order to prove (\ref{rec}) we need to compute for each case the number of ways it occurs. For a Belyi graph of type $(k-1,l;\m+\e_{i-1}-\e_i)$ there are $m_{i-1}+1$ red-blue cycles of length $2(i-1)$. This gives $(i-1)(m_{i-1}+1)$ possibilities to insert a red-green double edge into a blue edge belonging to one of the red-blue cycles of length $2(i-1)$. The same is true for graphs of type $(k,l-1;\m+\e_{i-1}-\e_i)$ when we insert green-blue double edges into the red edges. That covers the cases (i) and (ii) and yields the first line in (\ref{rec}).

In cases (iii)-(v) edge insertion amounts to picking a pair of edges, one red and one blue, cutting them in halves and connecting with a green edge. In case (iii) we start with a Belyi graph of type $(k,l;\m-\e_j-\e_{i-j}+\e_{i-1})$ and count the number of pairs consisting of one red and one blue edge belonging to the same red-blue cycle of length $2(i-1)$ such that the two complementary arcs have lengths $2j$ and $2(i-j)$. Clearly, this number is $(i-1)(m_{i-1}+1-\delta_{j,1}-\delta_{i-j,1})$, that yields the second line in (\ref{rec}). In case (iv) for a Belyi graph of type $(k,l;\m+\e_j+\e_{i-j}-\e_{i+1})$ we pick a red edge in a red-blue cycle of length $2j$ and a blue edge in a (different) red-blue cycle of length $2(i-j)$. The number of ways do this is $j(i-j)(m_j+1)(m_{i-j}+1+\delta_{j,i-j})$ that yields the third line in (\ref{rec}). In case (v) we pick a red edge on one connected component and a blue edge on another one. For connected components of types $(k_1,l_1;\m^{(1)}+\e_j)$ and $(k_2,l_2;\m^{(2)}+\e_{i-j})$ this can be done in $j(i-j)(m_j^{(1)}+1)(m_{i-j}^{(2)}+1)$ ways, giving the last two lines in (\ref{rec}). For graphs with non-trivial automorphisms, edge insertions (deletions) equivalent under the action of ${\rm Aut}(\G)$ have the same effect, so such graphs should be counted with the factor $\frac{1}{|{\rm Aut}(\G)|}$. This completes the proof of (\ref{rec}), or, equivalently, (\ref{pde}).

To show that the total generating function $F$ is uniquely determined by the initial condition $F\left|_{s=0}\right.=0$, we first notice that 
$F_1=uvt_1$ (for $d=1$ there is only one Belyi graph $\G_1$, see Fig.~1). The equation (\ref{pde}) recursively expresses $F_d,\;d\geq 2,$ in terms of $F_1,\dots, F_{d-1}$ as follows:
\begin{align}
d\cdot F_d=((u+v)L_1 + M_1)F_{d-1} + \sum_{i=2}^{d-1} t_{i+1} \sum_{j=1}^{i-1} j(i-j) \,\sum_{n=1}^{d-2}\frac{\partial F_n}{\partial t_j}\cdot\frac{\partial F_{d-1-n}}{\partial t_{i-j}}\;.\label{ind}
\end{align}
Therefore, $F_d$ is uniquely determined by (\ref{ind}) for each $d\geq 2$. Explicit formula (\ref{par}) for the partition function $e^F$ is just another way of writing the same thing.
\end{proof}

\section{Applications}
Here we discuss some consequences of Theorem 1. First, we notice that recursion (\ref{ind}) provides a relatively fast algorithm for computing the numbers $\N_{k,l}(\m)$. We can use it to compute, for instance, the numbers $G_{d,g}$ of dessins of degree $d$ and genus $g$:

\begin{cor}
Let
\begin{align}
G(x,y)=\sum_{d=1}^\infty\sum_{g=0}^{\left[\frac{d-1}{2}\right]}G_{d,g}\,x^d y^{2-2g}
\end{align}
be the generating function for the numbers $G_{d,g}$. Then
\begin{align}
G(x,y)=F(xy^{-1},y,y,y,y,\dots)\;.
\end{align}
\end{cor}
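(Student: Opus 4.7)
The plan is simply to unfold the definitions and apply the Riemann--Hurwitz identity $2g-2 = d-(k+l+m)$ (with $m = \sum_{i\geq 1} m_i$) that was recorded at the beginning of Section 2. No induction or new combinatorics is needed; this is purely a computation of a specialization.

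First, I would substitute $s = xy^{-1}$ and $u = v = t_1 = t_2 = \dots = y$ directly into
\begin{align*}
F(s,u,v,t_1,t_2,\dots) = \sum_{d\geq 1} s^d \sum_{k,l\geq 1}\;\sum_{\sum_i i m_i = d} \N_{k,l}(\m)\, u^k v^l t_1^{m_1} t_2^{m_2}\cdots
\end{align*}
Under this specialization, the monomial $u^k v^l \prod_i t_i^{m_i}$ collapses to $y^{k+l+m}$, and the prefactor $s^d$ becomes $x^d y^{-d}$. Hence the combined power of $y$ in each term is $y^{k+l+m-d}$.

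Next I would invoke the genus formula: for any Belyi graph of type $(k,l;\m)$ with $\sum_i i m_i = d$, one has $k + l + m - d = 2 - 2g$, where $g$ is the genus of the underlying curve (equivalently of the ribbon graph). Therefore the $y$-exponent in every term depends only on $g$, not on the finer data $(k,l,\m)$. Regrouping the sum by $(d,g)$ gives
\begin{align*}
F(xy^{-1},y,y,y,\dots) = \sum_{d\geq 1}\sum_{g\geq 0} x^d y^{2-2g} \sum_{\substack{k,l,\m \\ \sum i m_i = d \\ \text{genus } = g}} \N_{k,l}(\m).
\end{align*}
Finally I would observe that the inner sum is precisely $G_{d,g}$: every Belyi graph of degree $d$ and genus $g$ appears in exactly one summand $\N_{k,l}(\m)$ (indexed by its type), so $\sum_{k,l,\m} \N_{k,l}(\m) = \sum_\G |\mathrm{Aut}(\G)|^{-1} = G_{d,g}$ when the sum is restricted to those types compatible with degree $d$ and genus $g$. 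This yields $F(xy^{-1},y,y,y,\dots) = G(x,y)$. The only sanity check worth mentioning is that the range $0 \leq g \leq \lfloor (d-1)/2 \rfloor$ used in the definition of $G(x,y)$ is automatic: $g \geq 0$ because $k,l,m \geq 1$ for any genuine Belyi graph, and the upper bound $g \leq (d-1)/2$ follows from $k+l+m \geq 3$. There is no real obstacle; the corollary is essentially a bookkeeping consequence of the Euler characteristic.
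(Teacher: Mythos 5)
Your computation is correct and is exactly the intended argument: the paper states the corollary without proof, and the natural justification is precisely this specialization together with the Euler characteristic relation $2g-2=d-(k+l+m)$, which makes the $y$-exponent collapse to $2-2g$ and the inner sum over types reproduce $G_{d,g}$. Nothing is missing.
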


We call a Belyi graph {\em marked} if it has a distinguished (labeled) green edge. Any automorphism of a (connected) marked Belyi graph mapping the labeled green edge to itself is clearly identical.\footnote{Such an automorphism must preserve the endpoints of the labeled edge together with the red and blue edges incident to them, etc.} Therefore, the automorphism group of any connected marked Belyi graph is trivial, and the number $\widetilde{G}_{d,g}=d\,G_{d,g}$ of marked Belyi graphs of degree $d$ and genus $g$ must be an integer. The same is true for the numbers $\widetilde{\N}_{k,l}(\m)=d\cdot\N_{k,l}(\m)$, where $d=\sum_{i\geq 1}m_i$ 
(while $\N_{k,l}(\m)$ is only rational in general, the number $\widetilde{\N}_{k,l}(\m)$ is always an integer).
Note that the numbers $\widetilde{G}_{d,g}$ also enumerate rooted hypermaps of genus $g$ on $d$ darts, as well as subgroups of $\pi_1(\pl\setminus\{0,1,\infty\})$ of index $d$ and genus $g$.

There are closed formulas for $\widetilde{G}_{d,0}$ and $\widetilde{G}_{d,1}$ (cf. \cite{W}, \cite{A}):
\begin{align*}
\widetilde{G}_{d,0}=\frac{3\cdot 2^{d-1}\cdot(2d)!}{d!(d+2)!}\;,\quad 
\widetilde{G}_{d,1}=\frac13\sum_{i=0}^{d-3}2^l (4^{d-2-i}-1)\binom{d+i}{i}\;.
\end{align*}
For $g=2,3$ and $d\leq 12$ the numbers $\widetilde{G}_{d,g}$ were computed in \cite{W2}, but essentially nothing has been known about $\widetilde{G}_{d,g}$ for general $d$ and $g$, cf. Problem 3 in \cite{MN}.

In Table 1 we list the numbers $\widetilde{G}_{d,g}$ for small $d$ and $g$:

\begin{table}[htb]
\begin{center}
\caption{List of numbers $\widetilde{G}_{d,g}$ for $d\leq 14$ and $g\leq 4$.}
\medskip
\begin{tabular}{c|l|l|l|l|l}\vs
$d\backslash g$& 0 & 1 & 2 & 3 & 4\\\hline\vs
1  & 1         & 0          & 0           & 0           & 0\\\hline\vs
2  & 3         & 0          & 0           & 0           & 0\\\hline\vs
3  & 12        & 1          & 0           & 0           & 0\\\hline\vs
4  & 56        & 15         & 0           & 0           & 0\\\hline\vs
5  & 288       & 165        & 8           & 0           & 0\\\hline\vs
6  & 1584      & 1611       & 252         & 0           & 0\\\hline\vs
7  & 9152      & 14805      & 4956        & 180         & 0\\\hline\vs
8  & 54912     & 131307     & 77992       & 9132        & 0\\\hline\vs
9  & 339456    & 1138261    & 1074564     & 268980      & 8064\\\hline\vs
10 & 2149888   & 9713835    & 13545216    & 6010220     & 579744\\\hline\vs
11 & 13891584  & 81968469   & 160174960   & 112868844   & 23235300\\\hline\vs
12 & 91287552  & 685888171  & 1805010948  & 1877530740  & 684173164\\\hline\vs
13 & 608583680 & 5702382933 & 19588944336 & 28540603884 & 16497874380\\\hline\vs
14 & 4107939840& 47168678571& 108502598960& 404562365316& 344901105444
\end{tabular} 
\end{center} 
\end{table}

We finish with an observation (learned from Kazarian \cite{K}) that the generating function $F=F(s,u,v,t_1,t_2,\dots)$ satisfies an infinite system of non-linear partial differential equations called the KP (Kadomtsev-Petviashvili) hierarchy (this means that the numbers $\N_{k,l}(\m)$ additionally obey an infinite system of recursions). The KP hierarchy is one of the best studied completely integrable systems in mathematical physics. Below are the several first equations of the hierarchy:

\begin{align}\label{KP}
&F_{22}=-\frac12\,F_{11}^2+F_{31}-\frac1{12}\,F_{1111}\;,\nonumber\\
&F_{32}=-F_{11}F_{21}+F_{41}-\frac16F_{2111}\;,\nonumber\\
&F_{42}=-\frac12\,F_{21}^2-F_{11}F_{31}+F_{51}+\frac18\,F_{111}^2
+\frac1{12}\,F_{11}F_{1111}-\frac14\,F_{3111}+\frac1{120}\,F_{111111}\;,\nonumber\\
&F_{33}=\frac13\,F_{11}^3-F_{21}^2-F_{11}F_{31}+F_{51}
+\frac14\,F_{111}^2+\frac13\,F_{11}F_{1111}-\frac13\,F_{3111}+\frac1{45}\,F_{111111}\;,
\end{align}
where the subscript $i$ stands for the partial derivative with respect to $t_i$. 

The exponential $Z=e^F$ of any solution is called a {\em tau function} of the hierarchy. The space of solutions (or the space of tau functions) has a nice geometric interpretation as an infinite-dimensional Grassmannian (called the {\em Sato Grassmannian}), see, e.~g., \cite{MJD} or \cite{K} for details. In particular, the space of solutions is homogeneous: there is a Lie algebra $\widehat{\mathfrak{gl}(\infty)}$ (a central extension of $\mathfrak{gl}(\infty)$) that acts infinitesimally on the space of solutions, and the action of the corresponding Lie group is transitive.

\begin{cor}\label{tau}
The generating function $F=F(s,u,v,t_1,t_2,\dots)$ satisfies the infinite system of KP equations (\ref{KP}) with respect to $t_1,t_2,\dots$ for any parameters $s,u,v$. Equivalently, the partition function $Z=e^F$ is a 3-parameter family of KP tau functions.
\end{cor}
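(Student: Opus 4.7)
The strategy is to expand $Z = e^F$ in the Schur function basis and recognize it as a hypergeometric KP tau function.

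First, I would interpret $\N_{k,l}(\m)$ combinatorially via the classical correspondence between degree-$d$ Belyi pairs and $S_d$-conjugacy classes of triples $(\sigma_0,\sigma_1,\sigma_\infty)\in S_d^3$ satisfying $\sigma_0\sigma_1\sigma_\infty=1$ with $\langle\sigma_0,\sigma_1\rangle$ acting transitively: the counts $k, l$ are the numbers of cycles of $\sigma_0, \sigma_1$, and $\m$ is the cycle type of $\sigma_\infty$. By orbit--stabilizer, $\N_{k,l}(\m)$ equals $1/d!$ times the number of such labeled triples, so by the exponential formula $Z = e^F$ is the analogous generating function with the transitivity condition dropped.

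Second, I would apply the Burnside--Frobenius formula to write the labeled (not necessarily transitive) count as a sum over irreducible characters $\chi^\lambda$ of $S_d$, and then perform the cycle-type summations. Using the classical identity
\[
\sum_{\mu\vdash d} z_\mu^{-1}\chi^\lambda(\mu)\,u^{\ell(\mu)} \;=\; s_\lambda(p_i = u) \;=\; \frac{\dim\lambda}{d!}\prod_{(i,j)\in\lambda}(u+j-i)
\]
(and the analogous identity with $v,\rho$ in place of $u,\mu$), together with $\sum_\m z_\m^{-1}\chi^\lambda(\m)\prod_i t_i^{m_i} = s_\lambda(t)$ under the convention $p_i = t_i$, the Burnside sum collapses to
\[
Z(s,u,v,t) \;=\; \sum_\lambda \frac{s^{|\lambda|}\dim\lambda}{|\lambda|!}\prod_{(i,j)\in\lambda}(u+j-i)(v+j-i)\,s_\lambda(t).
\]

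Third, I would identify this series with a hypergeometric KP tau function. Since $\dim\lambda/|\lambda|! = s_\lambda(p_1 = 1,\;p_{k\geq 2}=0)$, the expression rewrites as
\[
Z(s,u,v,t) \;=\; \sum_\lambda \left(\prod_{(i,j)\in\lambda} s\,(u+j-i)(v+j-i)\right) s_\lambda(t)\,s_\lambda(\tilde t_0),
\]
where $\tilde t_0 = (1,0,0,\dots)$. By the classical theorem of Orlov--Shcherbin (used in a closely related context by Kazarian \cite{K}), any ``two-sided hypergeometric'' series $\sum_\lambda \prod_{(i,j)\in\lambda} f(j-i)\,s_\lambda(t)\,s_\lambda(\tilde t)$ is a tau function of the 2-Toda hierarchy, and in particular a KP tau function in $t$ for every fixed $\tilde t$. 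Specializing $\tilde t = \tilde t_0$ yields Corollary~\ref{tau}.

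The main obstacle is normalization bookkeeping: verifying that the $1/|{\rm Aut}(\G)|$ weight in \eqref{wc} matches the Burnside $1/d!$ weight through orbit--stabilizer, and confirming that $p_i = t_i$ is the convention in which the KP equations appear as in (\ref{KP}). The latter is readily checked by comparing the first equation of (\ref{KP}) with the Hirota form $(D_1^4 + 3 D_2^2 - 4 D_1 D_3)\tau\cdot\tau = 0$ of the first KP equation after the rescaling $t_n \leftrightarrow t_n/n$.
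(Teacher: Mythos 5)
Your argument is correct, but it is a genuinely different proof from the one in the paper. The paper deduces the corollary almost immediately from its Theorem \ref{main}: by formula (\ref{par}) one has $Z=e^{s((u+v)L_1+M_1+uvt_1)}\,1$, and since $t_1$, $L_1$, $M_1$ lie in $\widehat{\mathfrak{gl}(\infty)}$ (as recorded in \cite{K}), the exponential of their linear combination is a group element acting on the Sato Grassmannian, hence it carries the trivial tau function $1$ to another tau function; no Schur expansion is needed. Your route instead bypasses Theorem \ref{main} entirely: you pass from weighted Belyi graphs to labeled triples $(\sigma_0,\sigma_1,\sigma_\infty)$ via orbit--stabilizer, drop transitivity by the exponential formula, apply Frobenius' formula, and collapse the cycle-type sums with the content-product specialization $\sum_\mu z_\mu^{-1}\chi^\lambda(\mu)u^{\ell(\mu)}=\frac{\dim\lambda}{d!}\prod_{(i,j)\in\lambda}(u+j-i)$, arriving at the diagonal Schur expansion $Z=\sum_\lambda\bigl(\prod_{(i,j)\in\lambda}s(u+j-i)(v+j-i)\bigr)s_\lambda(t)\,s_\lambda(\tilde t_0)$, which is a hypergeometric (Orlov--Shcherbin) tau function. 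This is essentially the method of Goulden and Jackson \cite{GJ}, which the paper's Remark 1 explicitly cites as an earlier, different proof of this corollary. Each approach has its advantages: the paper's proof is two lines long and ties the KP property directly to the cut-and-join-type evolution equation (\ref{pde}), while yours is independent of Theorem \ref{main} and yields a stronger, fully explicit statement (a content-product formula for $Z$, in fact a 2-Toda tau function in the extra set of times). The only points to nail down in your write-up are the bookkeeping you already flag: the identification $|{\rm Aut}(\Gamma)|=$ centralizer order so that the $1/|{\rm Aut}|$ weights become $1/d!$ counts of labeled triples, the multiplicativity of the weight $u^kv^l\prod_i t_i^{m_i}s^d$ over connected components needed for the exponential formula, and the convention $t_i=p_i$ (power sums) under which the equations (\ref{KP}) are exactly the KP hierarchy in Kazarian's normalization.
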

\begin{proof}
We make use of Eq.~(\ref{par}).
To begin with, we notice that $1$ is obviously a KP tau function. Then, since $t_1, L_1, M_1\in\widehat{\mathfrak{gl}(\infty)}$ 
(cf. \cite{K}), the linear combination $s(u+v)L_1+sM_1 +suvt_1$ also belongs to $\widehat{\mathfrak{gl}(\infty)}$ for any $s,u,v$. Therefore, the exponential $e^{s(u+v)L_1+sM_1+suvt_1}$ preserves the Sato Grassmannian and maps KP tau functions to KP tau functions. Thus, $e^F$ is a KP tau function as well, and $F$ is a solution to KP hierarchy.
\end{proof}

\begin{rem}
Corollary \ref{tau} was earlier proven in \cite{GJ} by a different method. 
However, \cite{GJ} contains no analogs of the evolution equation (\ref{pde}).
\end{rem}

\noindent
{\bf Acknowledgements.}
Our special thanks are to Maxim Kazarian -- in particular, for suggesting a link between Theorem \ref{main} and KP theory, for explaining the geometry of the Sato Grassmannian, and for pointing out several flaws in the earlier version of this paper. We also thank Nikita Alexeev for useful discussions.


\begin{thebibliography}{00}
\bibitem{AAPZ}Alexeev,~N., Andersen,~J., Penner,~R., Zograf,~P.: 
Enumeration of chord diagrams on many intervals and their non-orientable analogs. arXiv:1307.0967 (2013).
\bibitem{A} Arqu{\`e}s,~D.: Hypercartes point{\'e}es sur le tore: D{\'e}compositions et d{\'e}nombrements. J. Combinatorial
Theory B {\bf 43}:3, 275--286 (1987).
\bibitem{B} Belyi,~G.: On Galois Extensions of a Maximal Cyclotomic Field. Mathematics of the USSR-Izvestiya {\bf 14}:2, 
247--256 (1980).
\bibitem{GJ} Goulden, I.P., Jackson, D.M.: The KP hierarchy, branched covers, and triangulations, Adv. Math. {\bf 219}, 
932--951 (2008).
\bibitem{G} Grothendieck, A.: Esquisse d'un Programme. In: Lochak, P., Schneps, L. (eds.) Geometric Galois Action, pp.~5--48, Cambridge University Press, Cambridge (1997).
\bibitem{K} Kazarian, M.: KP hierarchy for Hodge integrals. Adv. Math. {\bf 221}, 1--21 (2009).
\bibitem{LZ} Lando, S. K., Zvonkin, A. K.: Graphs on surfaces and their applications. Encyclopaedia of Mathematical Sciences {\bf 141}, Springer-Verlag, Berlin (2004).
\bibitem{MN} Mednykh, A., Nedela, R.: Enumeration of unrooted hypermaps of a given genus. Discrete Mathematics {\bf 310}:3, 
518--526  (2010).
\bibitem{MJD} Miwa, T., Jimbo, M., Date, E.: Solitons: Differential equations, symmetries and infinite-dimensional
algebras. Cambridge Tracts in Mathematics {\bf 135}, Cambridge University Press, Cambridge (2000).
\bibitem{W} Walsh, T.~R.~S.: Hypermaps versus bipartite maps. J. Combinatorial Theory B {\bf 18}:2, 155--163 (1975).
\bibitem{W2} Walsh, T.~R.~S.: Generating nonisomorphic maps and hypermaps without storing them, to appear in: Proceedings of GASCom 2012.
\end{thebibliography}
\end{document}